\newcommand*{\cUrl}[1]{\href{#1}{{\texttt{\footnotesize\color{teal}#1}}}}
\newtheorem*{thm*}{Theorem}
\newtheorem{thm}{Theorem}
\newtheorem{lem}[thm]{Lemma}
\newtheorem{prop}[thm]{Proposition}
\newtheorem*{conj*}{Conjecture}
\theoremstyle{remark}
\newtheorem{rem}[thm]{Remark}
\newtheorem{example}[thm]{Example}
\newtheorem*{rems*}{Remarks}
\newtheorem*{rem*}{Remark}
\newtheorem{remappendix}{Remark}[section]
\newcommand{\ZZ}{\mathbb{Z}}
\newcommand{\QQ}{\mathbb{Q}}
\newcommand{\PGL}{\operatorname{PGL}}
\DeclarePairedDelimiter\parentheses{\lparen}{\rparen}
\DeclarePairedDelimiter\braces{\lbrace}{\rbrace}
\NewDocumentCommand\set{ s o m o }{%
	\IfBooleanTF{#1}{\IfNoValueTF{#4}{\braces*{#3}}{\braces*{\,#3:#4\,}}}{%
	\IfNoValueTF{#2}{\IfNoValueTF{#4}{\braces{#3}}{\braces{\,#3:#4\,}}}{%
	\IfNoValueTF{#4}{\braces[#2]{#3}}{\braces[#2]{\,#3:#4\,}}}}%
}
\newcommand{\sym}{\mathfrak{S}}
\crefname{section}{§}{§§}
\crefname{figure}{Figure}{Figures}
\numberwithin{equation}{section}
\title[%
	Galois groups of \texorpdfstring{$\binom{\MakeLowercase{n}}{0} + \binom{\MakeLowercase{n}}{1} X + \ldots + \binom{\MakeLowercase{n}}{6} X^6$}{a truncated binomial expansion}%
]{%
	Galois groups of $\displaystyle \binom{n}{0} + \binom{n}{1} X + \ldots + \binom{n}{6} X^6$%
}
\date{\today{}}
\subjclass[2020]{
	Primary
	11R09; 
	Secondary
	11R32.
}
\keywords{Truncated binomial expansion, Galois group, $\PGL(2;5)$, symmetric group}
\author{Benjamin~Klahn}
\author{Marc~Technau}
\address{%
	Benjamin~Klahn and Marc~Technau\\%
	Institut für Analysis und Zahlentheorie\\%
	Graz University of Technology\\%
	Kopernikusgasse~24/II\\%
	8010~Graz\\%
	Austria}
\email{klahn@math.tugraz.at}
\email{mtechnau@math.tugraz.at}
\begin{document}
\begin{abstract}
	We show that the Galois group of the polynomial in the title is isomorphic to the full symmetric group on six symbols for all but finitely many $n$.
	This complements earlier work of Filaseta and Moy, who studied Galois groups of $\binom{n}{0} + \binom{n}{1} X + \ldots + \binom{n}{k} X^k$ for more general pairs $(n,k)$, but had to admit a possibly infinite exceptional set specifically for $k=6$ of at most logarithmic growth in $n$.
	The proof rests upon invoking Faltings' theorem on a suitable fibration of Galois resolvents to show that this exceptional set is, in fact, finite.
\end{abstract}
\maketitle

\section{Introduction and main result}

In~\cite{filaseta2007}, Filaseta, Kumchev, and Pasechnik investigated irreducibility of the polynomials
\[
	P_{n,k} = \binom{n}{0} + \binom{n}{1} X + \ldots + \binom{n}{k} X^k
	\quad (1\leq k \leq n),
\]
whilst attributing the origin of the problem to a 2004 MSRI programme on \emph{`topological aspects of real algebraic geometry'}, in connexion with the work of Scherbak (cf.~\cite{scherbak2004}).
While the cases of $P_{n,n}= (X+1)^n$ and $P_{n,n-1}= (X+1)^n - X^n$ are easily addressed (with the latter polynomial being irreducible if and only if $n$ is prime), the situation for $1\leq k \leq n-2$ is more subtle.
The authors of~\cite{filaseta2007} mention that numerical computations seem to suggest that all such $P_{n,k}$ ought to be irreducible.
In support of this they prove, amongst other things, that for \emph{fixed} $k\geq 1$, the polynomials $P_{n,k}$ are irreducible for all \emph{sufficiently large} $n$.
(Here `sufficiently large' may depend on $k$.)
Since then, further progress has been made on the question as for which pairs $(n,k)$ the irreducibility of $P_{n,k}$ can be guaranteed; see~\cite{khanduja2011,dubickas2017,jakhar2018}.

It is natural to also ask for determining the Galois groups of the polynomials $P_{n,k}$.
This problem was studied by Filaseta and Moy~\cite{filaseta2018}.
The present note is concerned with a curious anomaly appearing in their work with respect to the Galois groups of $P_{n,6}$ as $n=8,9,\ldots$; their full result may be enunciated as follows:
\begin{thm}[Filaseta--Moy]\label{thm:FM}
	Let $k\geq 2$ be a fixed integer.
	\item
	\begin{enumerate}
		\item\label{enum:FM:1}
		For $k\neq 6$ and $n$ sufficiently large in terms of $k$, the Galois group of $P_{n,k}$ is as large as one could expect; i.e., it is isomorphic to the full symmetric group $\sym_k$ on $k$ symbols.
		\item\label{enum:FM:2}
		For $k = 6$, the set of $n=8,\ldots,N$ for which the Galois group of $P_{n,k}$ is \emph{not} isomorphic to $\sym_6$ contains \emph{at most} $O(\log N)$ many elements.
		For all of these $n$, the Galois group in question is isomorphic to $\PGL(2;5)$.
	\end{enumerate}
\end{thm}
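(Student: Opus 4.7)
My plan is to combine Newton-polygon analysis at carefully chosen primes with classical results on transitive permutation groups in order to pin down the Galois group $G$ of $P_{n,k}$ as a subgroup of $\sym_k$. I would first invoke the irreducibility results of Filaseta, Kumchev, and Pasechnik to ensure that $P_{n,k}$ is irreducible, and hence $G$ acts transitively on the $k$ roots, for $n$ sufficiently large in terms of $k$.

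For each prime $p$ dividing one of $n, n-1, \ldots, n-k+1$, Kummer's theorem governs the valuations $v_p\!\binom{n}{j}$ and thereby pins down the Newton polygon of $P_{n,k}$ at $p$. Its slopes determine the degrees of the irreducible factors of $P_{n,k}$ over $\QQ_p$, and via decomposition and inertia groups one extracts cycle types that must occur in $G$. When $k \neq 6$, I would choose primes so as to produce a prime-length cycle of length $\ell$ with $k/2 < \ell < k-2$; by a Jordan-type theorem this forces $\mathfrak{A}_k \leq G$, and computing the parity of $\operatorname{disc}(P_{n,k})$ modulo a controllable auxiliary prime then supplies an odd permutation in $G$, upgrading the inclusion to $G = \sym_k$.

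The case $k = 6$ is special because $\sym_6$ harbours the exceptional transitive subgroup $\PGL(2;5)$ arising from its action on $\mathbb{P}^1(\mathbb{F}_5)$. Standard Newton-polygon and discriminant arguments should eliminate the remaining candidate subgroups (the imprimitive ones as well as $\mathfrak{A}_6$), leaving only the dichotomy $G \in \{\sym_6,\, \PGL(2;5)\}$. These two are separated by a sextic resolvent $R_n(Y) \in \ZZ[Y]$ attached to the index-six subgroup $\PGL(2;5) \leq \sym_6$: one has $G \cong \PGL(2;5)$ precisely when $R_n$ acquires a rational root.

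The main obstacle is then bounding the count of $n \leq N$ for which $R_n$ has a rational root. Since $R_n$ is (essentially) monic with coefficients polynomial in $n$, any rational root is an integer whose magnitude is polynomially controlled by $n$, and such integer roots encode a Diophantine condition whose solution set I expect to be thin. An elementary height or divisor-function argument should suffice to deliver the announced $O(\log N)$ upper bound on the exceptional set, though obtaining a truly \emph{finite} bound seems to demand deeper input—this being precisely the gap that the present paper aims to close via Faltings' theorem applied to a suitable fibration of the relevant Galois resolvents.
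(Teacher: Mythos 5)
Two preliminary remarks on the target of comparison: the paper does not prove this theorem at all --- it is quoted as known background from Filaseta and Moy~\cite{filaseta2018}, and the paper only summarises their method (Newton polygons of $P_{n,k}$ at primes dividing $n(n-1)\cdots(n-k+1)$, supplemented by discriminant information). Measured against that, your sketch of part~(1) is faithful in outline: irreducibility from~\cite{filaseta2007}, Newton polygons producing via inertia an $\ell$-cycle for a prime $\ell$ with $k/2<\ell\leq k-3$ (transitivity plus such a cycle forces primitivity, and Jordan's theorem then gives the alternating group), and a discriminant computation to supply an odd permutation. Your diagnosis of why $k=6$ is exceptional --- the transitive subgroup $\PGL(2;5)\leq\sym_6$ whose cycle types evade this sieve --- is also on target, and it is worth noting that for $k=6$ there is in fact \emph{no} prime in the interval $(k/2,k-3]$, which is exactly where your part~(1) argument breaks down.

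The genuine gap is in part~(2), namely your claimed source of the $O(\log N)$ bound. You propose to characterise the exceptional $n$ by the condition that a sextic resolvent $R_n(Y)$ attached to $\PGL(2;5)\leq\sym_6$ has a rational root, and then to count such $n\leq N$ by ``an elementary height or divisor-function argument''. No such argument exists: the equation $R_n(y)=0$ defines a plane curve in the variables $(n,y)$ (in the present paper this is essentially the curve $P_*=0$, of genus $7$), and bounding the number of $n\leq N$ above which it has an integer point is precisely the hard Diophantine problem here. Height bounds only control the size of a putative root $y$ in terms of $n$, not the number of $n$ for which one exists; divisor-function arguments require a multiplicative structure (a product of integer factors equal to a fixed integer) that $R_n$ does not have; and general elementary point-counting results of Bombieri--Pila type give bounds like $N^{1/6+\varepsilon}$, nowhere near $O(\log N)$. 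This is why the present paper must invoke Faltings' theorem even to get finiteness. Filaseta and Moy obtain their logarithmic bound from the \emph{other} side of the argument, with no $\PGL(2;5)$-resolvent at all (the paper pointedly remarks that the only resolvent implicit in~\cite{filaseta2018} is the quadratic one, i.e.\ the discriminant): their Newton-polygon machinery produces, for all but a thin set of $n$, an element of $G$ whose cycle type does not occur in $\PGL(2;5)$, and the $n$ for which no prime with the required valuation pattern exists are forced into an explicitly described set --- roughly, quantities built from $n$ must be powers of fixed small primes --- which has $O(\log N)$ elements up to $N$. A further, smaller, issue: the ``precisely when'' in your dichotomy step uses the converse of the resolvent criterion, which requires a separability hypothesis on the resolvent (cf.~\cite{lefton1977}); the easy direction alone (as in \cref{lem:resolvent}) suffices for the paper's purposes, but your biconditional would need this extra check.
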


\begin{rems*}
	\begin{enumerate}
		\item The group $\PGL(2;5)$ may be viewed as a transitive subgroup of $\sym_6$ by means of its action on the one-dimensional subspaces of $\mathbb{F}_5^2$, after fixing an arbitrary ordering of these subspaces.
		Moreover, $\PGL(2;5) \cong \sym_5$, as abstract groups.
		\item The irreducibility of $P_{n,6}$ is known for all $n\geq 8$ by the work of Dubickas and {\v{S}iurys}~\cite{dubickas2017}.
		\item The exceptional set in \cref{thm:FM}~\cref{enum:FM:2} is non-empty.
		Indeed, one can show that the Galois group of $P_{10,6}$ is, in fact, isomorphic to $\PGL(2;5)$.
	\end{enumerate}
\end{rems*}

Based on numerical computations for $n$ up to $10^{10}$, Filaseta and Moy~\cite{filaseta2018} made the following conjecture:
\begin{conj*}
	The Galois group of $P_{n,6}$ is isomorphic to $\sym_6$ for all $n\geq 11$.
\end{conj*}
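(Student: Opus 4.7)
By \cref{thm:FM}~\cref{enum:FM:2}, for every integer $n \geq 8$ one has $\Gal(P_{n,6}) \in \{\sym_6, \PGL(2;5)\}$, and the only currently known occurrence of $\PGL(2;5)$ is at $n = 10$. It therefore suffices to prove that $\Gal(P_{n,6}) \not\cong \PGL(2;5)$ for every $n \geq 11$. Containment (up to conjugation) in the index-$6$ subgroup $\PGL(2;5) \leq \sym_6$ is detected by a classical sextic resolvent $R_6 \in \QQ(a_0,\ldots,a_6)[Y]$: after specialising the indeterminates $a_i$ to the coefficients of a separable sextic $f$, the Galois group of $f$ lies in a conjugate of $\PGL(2;5)$ precisely when $R_6$ acquires a rational root. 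Plugging in the coefficients of $P_{n,6}$, which are polynomials in $n$, converts this into an explicit algebraic relation $F(y,n) = 0$ with $F \in \QQ[Y,N]$, cutting out an affine plane curve $\mathcal{C}$; every exceptional $n \geq 11$ would lift to a $\QQ$-rational point on $\mathcal{C}$.

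The strategy is then to (i) compute $F$ explicitly, for instance via Cayley's construction or a Gröbner basis reduction; (ii) pass to a smooth projective model $\tilde{\mathcal{C}}$ and determine its genus $g$; (iii) enumerate $\tilde{\mathcal{C}}(\QQ)$ \emph{effectively}; and (iv) push forward to the $n$-coordinate and verify that the only $n \geq 8$ appearing there is $n = 10$. Step (iii) is the crux: in contrast with the ineffective finiteness provided by Faltings' theorem, the method of Chabauty-Coleman yields an explicit $p$-adic bound on $|\tilde{\mathcal{C}}(\QQ)|$ whenever the Mordell-Weil rank $r$ of $\mathrm{Jac}(\tilde{\mathcal{C}})$ satisfies $r < g$, which, combined with a search for rational points of bounded height at a prime $p$ of good reduction, produces a provably complete list. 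Should the rank inequality fail directly on $\tilde{\mathcal{C}}$, the plan is to descend to a suitable quotient — for instance under involutions arising from the outer automorphism of $\sym_6$, or from explicit automorphisms of $\mathcal{C}$ — where the rank drops below the genus, or to apply the Mordell-Weil sieve together with an explicit two-descent.

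The principal obstacle is precisely this effective determination of $\tilde{\mathcal{C}}(\QQ)$: neither $g$ nor the Mordell-Weil rank of $\mathrm{Jac}(\tilde{\mathcal{C}})$ can be predicted in advance of computing $F$, and the Chabauty-Coleman hypothesis $r < g$ is by no means automatic. As a fallback, a complementary local argument is available: if $\Gal(P_{n,6}) \subseteq \PGL(2;5)$, then for every prime $p$ of good reduction the cycle type of Frobenius (equivalently, the factorisation type of $P_{n,6} \bmod p$) must lie in $\PGL(2;5) \subset \sym_6$. Producing, for every sufficiently large $n$, a small prime $p$ exhibiting a forbidden factorisation type — for example a cycle type corresponding to an element of $\sym_6 \setminus \PGL(2;5)$ — would rule out the $\PGL(2;5)$ alternative uniformly; combined with a direct computer algebra verification of $\Gal(P_{n,6})$ for every $11 \leq n \leq N_0$ up to the explicit threshold $N_0$ delivered either by Chabauty-Coleman or by such a local sieve, this would settle the conjecture.
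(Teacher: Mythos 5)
You have not given a proof: what you have written is a research programme whose decisive steps are exactly the ones left unestablished, and the statement you are addressing is in fact \emph{open} --- the paper itself proves only the weaker finiteness statement (\cref{thm:Finiteness}), by running your steps (i)--(ii) (a $\PGL(2;5)$-resolvent specialised at the coefficients of $P_{n,6}^{\text{rec}}$, giving the curve $P_*=0$ of genus $7$) and then invoking Faltings' theorem, which is ineffective and hence cannot reach the conjecture. Your proposed upgrade hinges on step (iii), and there the gaps are genuine. First, Chabauty--Coleman requires knowing (an upper bound for) the Mordell--Weil rank of the Jacobian of a genus-$7$ curve; computing or even usefully bounding this rank is far beyond routine methods for such a curve, your fallback of descending to quotients presupposes automorphisms of the curve that you have not exhibited (an ``involution from the outer automorphism of $\sym_6$'' does not obviously act on the plane curve cut out by this particular resolvent), and the hypothesis $r<g$ may simply fail --- you acknowledge this, which is precisely an admission that no proof is in hand. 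Second, your complementary local argument is circular in difficulty: producing, for \emph{every} sufficiently large $n$, a prime $p$ at which $P_{n,6}\bmod p$ has a factorisation type forbidden in $\PGL(2;5)$ is essentially equivalent to the conjecture itself; Filaseta and Moy's analysis in~\cite{filaseta2018} is already built on such local data (Newton polygons and discriminants) and could only bound the exceptional set by $O(\log N)$, so no uniform supply of such primes is known.

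Two smaller technical points. Your claim that the Galois group lies in a conjugate of $\PGL(2;5)$ ``precisely when'' the resolvent acquires a rational root overstates what is true: the implication used here is only one-directional (containment implies an integer root, \cref{lem:resolvent}~\cref{enum:resolvent:integerroot}); the converse requires a separability hypothesis on the specialised resolvent (see~\cite{lefton1977} and the remark following \cref{lem:resolvent}). For your strategy this is harmless --- ruling out rational roots rules out $\PGL(2;5)$, and spurious rational points on the curve only enlarge the finite list of $n$ to check directly --- but the biconditional should not be asserted. Also, the resolvent must be applied to the monic reciprocal polynomial $P_{n,6}^{\text{rec}}$ rather than to $P_{n,6}$ itself, as the paper does in \cref{sec:subsitution}; specialising \cref{lem:resolvent} at a non-monic polynomial is not licensed. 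In summary: your plan coincides with the paper's actual mechanism up to the construction of the curve, and correctly identifies effective determination of its rational points as the missing ingredient, but since that ingredient is supplied only conditionally, the conjecture remains unproven by your argument.
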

We make progress towards this conjecture by reducing the logarithmic bound in \cref{thm:FM}~\cref{enum:FM:2} to some unspecified constant:
\begin{thm}\label{thm:Finiteness}
	The Galois group of $P_{n,6}$ is isomorphic to $\sym_6$ for all but finitely many positive integers $n$.
\end{thm}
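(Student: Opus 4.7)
The plan is to remove the logarithmic slack in \cref{thm:FM}~\cref{enum:FM:2} by means of Faltings' theorem. The Filaseta--Moy dichotomy already guarantees that, for $n$ large enough, the Galois group $G_n$ of $P_{n,6}$ is isomorphic either to $\sym_6$ or to $\PGL(2;5)$, so it suffices to show that the latter alternative arises only for finitely many $n$.

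To that end, I would first rephrase the condition ``$G_n$ is contained in $\PGL(2;5)$ (as a subgroup of $\sym_6$)'' as the existence of a rational root of an auxiliary polynomial depending on $n$. Since $\PGL(2;5)$ embeds in $\sym_6$ as a transitive subgroup of index $6$ via its action on the six lines of $\mathbb{F}_5^2$, a standard Galois-theoretic construction yields a resolvent polynomial $R(n,T) \in \mathbb{Z}[n,T]$ of degree~$6$ in $T$ such that, whenever $P_{n,6}$ is irreducible (which is known to hold for all $n \geq 8$), the group $G_n$ lies inside a conjugate of $\PGL(2;5)$ if and only if $R(n,T)$ has a rational root in $T$. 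Concretely, $R$ may be obtained as the minimal polynomial over $\mathbb{Q}(n)$ of the value $\theta(\alpha_1,\ldots,\alpha_6)$ of some polynomial $\theta \in \mathbb{Z}[X_1,\ldots,X_6]$ whose stabiliser under $\sym_6$ equals $\PGL(2;5)$, evaluated at the ordered roots $\alpha_1,\ldots,\alpha_6$ of $P_{n,6}$.

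Next, I would view the vanishing locus $C \subset \mathbb{A}^2_{\mathbb{Q}}$ of $R(n,T)$, with coordinates $(n,T)$, as a plane algebraic curve, and compute the geometric genus of each of its irreducible components. The exceptional integers $n$ targeted by \cref{thm:Finiteness} correspond to rational points $(n,T) \in C(\mathbb{Q})$ lying on some component projecting dominantly onto the $n$-line. If every such dominating component has geometric genus at least $2$, Faltings' theorem immediately yields that it contributes only finitely many rational points, whence the exceptional set is finite.

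The principal obstacle is thus the explicit determination of $R(n,T)$ together with a rigorous verification of the genus lower bound. Producing $R$ requires either a substantive symbolic computation with $\PGL(2;5)$-invariants --- conveniently delegated to a computer algebra system --- or an appeal to one of the classical sextic resolvents linking sextics whose Galois group lies in $\PGL(2;5) \cong \sym_5$ to associated quintic Tschirnhaus transforms. Once $R$ is at hand, one must exclude the presence of irreducible components of geometric genus $0$ or $1$ dominating the $n$-axis, for which the Riemann--Hurwitz formula applied to the projection $C \to \mathbb{A}^1_n$, combined with an analysis of the ramification data at the finitely many singular or branch loci, furnishes the natural line of attack. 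A useful consistency check is that the value $n=10$, for which $G_{10} \cong \PGL(2;5)$ by the remarks following \cref{thm:FM}, must give rise to an actual rational point on $C$, so that finiteness of the exceptional set is a genuinely delicate arithmetic statement about $C$ rather than a matter of its having no rational points whatsoever.
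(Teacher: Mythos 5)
Your proposal follows essentially the same route as the paper: invoke the Filaseta--Moy dichotomy, construct a $\PGL(2;5)$-resolvent whose rational roots in $Y$ detect containment of the Galois group in $\PGL(2;5)$, regard its vanishing locus as a plane curve over $\QQ$ in the variables $(n,Y)$, and conclude finiteness of rational points via Faltings' theorem after a (computer-assisted) verification that the genus is at least $2$ --- the paper finds the genus to be $7$ using Magma, working with the reciprocal polynomial $P_{n,6}^{\text{rec}}$ to ensure monicity. The only inessential differences are that the paper needs (and cites) only the one direction of the resolvent criterion that you actually use, and that it computes the genus of the single curve directly rather than analysing components via Riemann--Hurwitz.
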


The main tool in~\cite{filaseta2018} are Newton polygons, which provide information on the Galois group of integer polynomials viewed as polynomials over the field $\QQ_p$ of $p$-adic integers.
This is supplemented by information on the discriminant of $P_{k,n}$.

In order to prove \cref{thm:Finiteness}, we construct a \emph{Galois resolvent} for $\PGL(2;5)$ and $P_{n,6}$.
Roughly speaking, this is a polynomial in the coefficients of $P_{n,6}$ and an additional variable $Y$ whose admittance of a rational root $y$ (or lack thereof) serves as a detector for the Galois group of $P_{n,6}$ being contained in $\PGL(2;5)$ (up to conjugation, under suitable identifications).
(Incidentally, the aforementioned use of discriminants in~\cite{filaseta2018} can be viewed as the use of the \emph{quadratic resolvent} for the alternating group $A_n$ in $\sym_n$.)
As the coefficients of $P_{n,6}$ are polynomials in $n$ for $n\geq7$, the zero locus of the resolvent in the variables $n$ and $Y$ defines a curve which turns out to have genus $\geq 2$.
We use Faltings' finiteness theorem to show that the number of rational points on this zero locus is finite.

Both the determination of the aforementioned curve as well as its genus are computationally expensive and were accomplished using computer algebra software.
In~\cref{sec:AuxiliaryResult} below, we describe the details of the construction.
In~\cref{sec:ProofOfTheorem}, we carry out the proof of \cref{thm:Finiteness}.

\begin{rem*}
	After submitting this work, Michael Filaseta kindly informed us that \cref{thm:Finiteness} was previously obtained, using a very similar approach, by Jeremy Rouse and Jacob Juillerat (see \cite[Theorem~1.3]{juillerat2021}).
\end{rem*}

\section{Auxiliary results}
\label{sec:AuxiliaryResult}

\subsection{Galois resolvents}

Galois resolvents are a classical tool for determining Galois groups of polynomials.
Roughly speaking, given a polynomial $P$, a \emph{(Galois) resolvent} for $P$ is a polynomial whose splitting behaviour carries information about the Galois group of $P$.
In this subsection, we recall the basic results regarding resolvents that we shall require for the proof of \cref{thm:Finiteness}.
Nothing in this subsection is particularly novel.

\medskip

The resolvents we require are produced by means of the following lemma:

\begin{lem}\label{lem:resolvent}
	Let $U$ be a subgroup of the symmetric group $\sym_k$ for $k\geq 1$.
	Let $\nu_1,\ldots,\nu_k$ be non-negative integers.
	Consider the polynomial
	\[
		\Phi = \prod_{\sigma U \in \sym_k/U} \parentheses[\Big]{
			Y - \sum_{\pi \in \sigma U} \prod_{j=1}^k X_{\pi(j)}^{\nu_j}
		} \in \ZZ[Y,X_1,\ldots,X_k]
	\]
	in the variables $Y,X_1,\ldots,X_k$, where the outer-most product ranges over the left cosets $\sigma U$ of $U$ in $\sym_k$.
	Suppose that $P\in\ZZ[X]$ is a monic polynomial of degree $k$ with distinct roots $x_1,\ldots,x_k$ in the algebraic closure of $\QQ$.
	View the Galois group $G$ of $P$ as a subgroup of $\sym_k$ by means of how it permutes $x_1,\ldots,x_k$.
	Then the following assertions hold:
	\item
	\begin{enumerate}
		\item\label{enum:resolvent:symm}
		$\Phi$ can be written as an integer polynomial in $Y$ and in the elementary symmetric polynomials in the variables $X_1,\ldots, X_k$.
		\item\label{enum:resolvent:integerroot}
		If $G \leq \sym_k$ is conjugate to a subgroup of $U$, then $\Phi(Y,\boldsymbol{x})$ admits a root in the integers.
	\end{enumerate}
\end{lem}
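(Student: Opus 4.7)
The plan is to treat the two parts separately, both hinging on understanding how $\sym_k$ permutes the factors defining $\Phi$. For~\cref{enum:resolvent:symm}, I would view $\Phi$ as a polynomial in $Y$ with coefficients in $\ZZ[X_1,\ldots,X_k]$ and verify that these coefficients are invariant under the $\sym_k$-action $X_i \mapsto X_{\tau(i)}$. Concretely, applying $\tau\in\sym_k$ turns $\prod_{j} X_{\pi(j)}^{\nu_j}$ into $\prod_j X_{(\tau\pi)(j)}^{\nu_j}$; as $\pi$ runs through a coset $\sigma U$, the reindexed element $\tau\pi$ runs through $\tau\sigma U$, so the inner sum attached to $\sigma U$ is mapped to the inner sum attached to $\tau\sigma U$. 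Since left multiplication by $\tau$ merely permutes the cosets in $\sym_k/U$, the outer product is unchanged. The fundamental theorem on symmetric polynomials then expresses each coefficient of $\Phi$ (as a polynomial in $Y$) as an integer polynomial in the elementary symmetric polynomials in $X_1,\ldots,X_k$.

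For~\cref{enum:resolvent:integerroot}, the same coset bookkeeping pinpoints a factor of $\Phi(Y,\boldsymbol{x})$ that vanishes at an integer. If $G$ is conjugate to a subgroup of $U$, choose $\rho\in\sym_k$ with $\rho^{-1}G\rho\subseteq U$, i.e.\ $G\subseteq \rho U \rho^{-1}$, and set
\[
	y = \sum_{\pi\in\rho U}\, \prod_{j=1}^k x_{\pi(j)}^{\nu_j}.
\]
For $g\in G$, the Galois action $x_i\mapsto x_{g(i)}$ reindexes this sum to $\sum_{\pi\in g\rho U}\prod_j x_{\pi(j)}^{\nu_j}$; but $g\rho U = \rho(\rho^{-1}g\rho)U = \rho U$, so $y$ is fixed by $G$ and thus lies in $\QQ$. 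Because $P$ is monic with integer coefficients, each $x_i$ is an algebraic integer, whence so is $y$; consequently $y\in\ZZ$. Finally, $y$ is by construction one of the quantities $\sum_{\pi\in\sigma U}\prod_j x_{\pi(j)}^{\nu_j}$ that makes a factor of $\Phi(Y,\boldsymbol{x})$ (viewed as a polynomial in $Y$) vanish, so it is an integer root.

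The main obstacle is largely clerical: one must keep the left/right conventions straight, so that ``$G$ is conjugate to a subgroup of $U$'' translates into a coset of $U$ being stabilised by $G$ under \emph{left} multiplication (rather than right). Once this is pinned down, both assertions reduce to the symmetry of the outer product over $\sym_k/U$, combined with the standard fact that a $\Gal(\overline\QQ/\QQ)$-fixed algebraic integer is a rational integer.
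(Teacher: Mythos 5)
Your proof is correct and complete, and it is essentially the paper's approach: the paper does not write out an argument but defers to Lemma~5 of~\cite{dietmann2012}, whose proof (for the special exponents $\nu_j=j$) is exactly the coset-permutation argument you give — invariance of the outer product under left multiplication for part~(1), and Galois-stability of the coset $\rho U$ together with the fact that a rational algebraic integer lies in $\ZZ$ for part~(2). Your write-up simply carries out in detail what the paper's citation leaves implicit.
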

\begin{proof}
	The special case of the present lemma with exponents $\nu_j = j$ can be found in \cite[Lemma~5]{dietmann2012}; the case with general exponents is proved in exactly the same fashion, so we omit the details.
	(We note that similar variants have also proved useful in~\cite{castillo2017,chow2023}.)
\end{proof}

\begin{rem*}
	Under some extra separability assumption, the converse to part~\cref{enum:resolvent:integerroot} of \cref{lem:resolvent} holds as well (see~\cite{lefton1977}).
	However, we do not require this fact here.
\end{rem*}

As the order of $\PGL(2;5)$ is quite large, the resolvents furnished by \cref{lem:resolvent} turn out to be quite unwieldy.
For that reason, below, we illustrate \cref{lem:resolvent} on a toy example.
The example itself serves no other purpose here, though; before giving it, we need to fix some notation.
The $j$th \emph{elementary symmetric polynomial} $e_j$ in $k$ variables $X_1,\ldots,X_k$ is given by
\[
	e_j = \sum_{\substack{
		\mathscr{M}\subseteq\set{1,\ldots,k} \\
		\#\mathscr{M} = j
	}}
	\prod_{
		m\in\mathscr{M}
		\vphantom{\subseteq\set{1,\ldots,k}}
	} X_m \in \ZZ[X_1,\ldots,X_k],
\]
where $\mathscr{M}$ ranges over all subsets of $\set{1,\ldots,k}$ with precisely $j$ elements.
In particular, one has
\begin{equation}\label{eq:ElSymmPolysExpansion}
	\prod_{j=1}^k (Y - X_j) = Y^k + \sum_{j=1}^k (-1)^j e_j Y^{k-j-1}.
\end{equation}

\begin{example}\label{example:CubicResolvent}
	We consider the case $k=3$ with the symmetric group $\sym_3$ and its subgroup $U = A_3$ (the alternating group on $3$ symbols).
	We choose $(\nu_1,\nu_2,\nu_3) = (1, 2, 0)$ in \cref{lem:resolvent}.
	Then
	\begin{equation}\label{eq:ResolventExample}
		\begin{aligned}
			\Phi &
			= \parentheses[\big]{ Y - \parentheses{ X_1^2 X_2 + X_1 X_3^2 + X_2^2 X_3 } } \mkern 1mu
			  \parentheses[\big]{ Y - \parentheses{ X_1^2 X_3 + X_1 X_2^2 + X_2 X_3^2 } } \\ &
			= (e_1^3 e_3 - 6 e_1 e_2 e_3 + e_2^3 + 9 e_3^2) + (3 e_3 - e_1 e_2) Y + Y^2,
		\end{aligned}
	\end{equation}
	where the expansion with elementary symmetric polynomials was found using a computer algebra system.
	
	Consider the polynomials $P_+ = X^3 + 3 X^2 + 3$ and $P_- = X^3 + 3 X^2 - 3$.
	Both are clearly irreducible by the Schönemann--Eisenstein criterion.
	Let $\boldsymbol{x}_\pm$ denote a triple consisting of the roots of $P_\pm$ in the algebraic closure of $\QQ$ in any order.
	We can then compute $\Phi(Y,\boldsymbol{x}_\pm)$ from the above representation with elementary symmetric polynomials, for by~\cref{eq:ElSymmPolysExpansion} we have
	\[
		e_1(\boldsymbol{x}_\pm) =  - 3, \quad
		e_2(\boldsymbol{x}_\pm) =    0, \quad\text{and}\quad
		e_3(\boldsymbol{x}_\pm) = \mp3.
	\]
	Hence,
	\[
		\Phi(Y,\boldsymbol{x}_+) = Y^2 - 9Y + 162 \quad\text{and}\quad
		\Phi(Y,\boldsymbol{x}_-) = Y^2 + 9Y.
	\]
	The first polynomial is easily seen to have no integer roots, whereas the second polynomial has the roots $0$ and $-9$.
	Consequently, the Galois group $G_+$ of $P_+$ (when viewed as a subgroup of $\sym_3$ as described in \cref{lem:resolvent}) is not conjugate to $U$, whereas the the Galois group $G_-$ of $P_-$ is.
	Using the fact that the Galois group of an irreducible cubic polynomial over $\QQ$ is isomorphic to either $\sym_3$ or $A_3$, we conclude that $G_+ \cong \sym_3$ and $G_- \cong A_3$.
\end{example}

\begin{rem}
	The choice $(\nu_1,\nu_2,\nu_3) = (1,2,3)$ produces the resolvent
	\begin{align*}
		\MoveEqLeft
		\parentheses[\big]{ Y - \parentheses{ X_1^3 X_2^2 X_3 + X_1^2 X_2 X_3^3 + X_1 X_2^3 X_3^2 } } \mkern 1mu
		\parentheses[\big]{ Y - \parentheses{ X_1^3 X_2 X_3^2 + X_1^2 X_2^3 X_3 + X_1 X_2^2 X_3^3 } } \\ &
		= (e_1^3 e_3 - 6 e_1 e_2 e_3 + e_2^3 + 9 e_3^2) e_3^2 + (3 e_3 - e_1 e_2) e_3 Y + Y^2,
	\end{align*}
	which is somewhat more complicated than the one given in~\cref{eq:ResolventExample}.
	This may serve as an explanation as to why varying the exponents $\nu_j$ may be beneficial.
	In this regard, one may also note that choosing all exponents $\nu_j$ equal to each other (say, $\nu_1=\ldots=\nu_k=\nu$) is useless.
	Indeed, in this case, the resolvent factors as $(Y - (\#U)e_k^\nu)^{[G:U]}$.
	This \emph{always} has an integer root, when specialised using the coefficients of a monic integer polynomial, and therefore cannot be used to establish that $G$ is not conjugate to $U$ in the setting of \cref{lem:resolvent}~\cref{enum:resolvent:integerroot}.
\end{rem}

\begin{rem}
	The discriminant of $\Phi$ in \cref{eq:ResolventExample} with respect to $Y$ can be seen to equal the well-known discriminant of the cubic polynomial $X^3 - e_1 X^2 + e_2 X - e_3$ (in $X$).
\end{rem}

\subsection{Constructing a resolvent for \texorpdfstring{$\boldsymbol{\PGL(2;5)}$}{PGL(2;5)}}
\label{subsec:computing:PGL25:resolvent}

As is evident from \cref{example:CubicResolvent}, the resolvents furnished by \cref{lem:resolvent} already fill a line when working with groups of very small order.
In particular, the computations for $\PGL(2;5)$ are rather unwieldy and are better left to computers.

It is known that $\PGL(2;5)$ is isomorphic to the subgroup of $\sym_6$ generated by the two permutations $(3\;6\;5\;4)$ and $(1\;2\;5) \; (3\;4\;6)$ given in cycle notation.
We choose $(\nu_1,\ldots,\nu_6) = (1,2,2,3,3,4)$ in \cref{lem:resolvent} and compute the corresponding $\Phi$.
Our particular choice of $\nu_1,\ldots,\nu_6$ was found by trial and error.
Some other choices of exponents yielded variants of $\Phi$ for which an expansion in terms of elementary symmetric polynomials seemed computationally infeasible.

\subsection{Application to the polynomials \texorpdfstring{$\boldsymbol{P_{n,6}}$}{Pₙ,₆}}
\label{sec:subsitution}

We now wish to apply the resolvent $\Phi$ computed in \cref{subsec:computing:PGL25:resolvent} to the polynomials $P_{n,6}$.

Suppose that $n\geq 8$.
First, note that the polynomial $P_{n,6}$ is not monic.
To rectify this, we consider instead the \emph{reciprocal polynomial} $P_{n,6}^{\text{rec}}$ of $P_{n,6}$,
\begin{equation}\label{eq:ReciprocalPolynomial}
	P_{n,6}^{\text{rec}}
	= X^6 P_{n,6}(X^{-1})
	= \binom{n}{6} + \ldots + \binom{n}{1} X^5 + \binom{n}{0} X^6.
\end{equation}
Note that the polynomial $P_{n,6}^{\text{rec}}$ is monic and irreducible for $n\geq 8$ (the latter follows from the irreducibility of $P_{n,6}$ for $n\geq 8$~\cite{dubickas2017}).

Let $x_1,\ldots,x_6$ denote the roots of $P_{n,6}^{\text{rec}}$ in the algebraic closure of $\QQ$ in some order.
We then compute $\Phi(Y,x_1,\ldots,x_6)$ for the $\PGL(2;5)$-resolvent $\Phi$ computed in \cref{subsec:computing:PGL25:resolvent} in the same fashion as in \cref{example:CubicResolvent}.
Observe that every binomial coefficient occurring in~\cref{eq:ReciprocalPolynomial} equals
\[
	(j!)^{-1} n (n-1) \cdots (n-j+1)
\]
for suitable $0\leq j \leq 6$; these are polynomials in $n$.
Therefore, we may view $\Phi(Y,x_1,\ldots,x_6)$ as a polynomial in $Y$ and $n$, which we shall call $P_*$:
\begin{equation}\label{eq:Resolvent}
	P_* = \Phi(Y,x_1,\ldots,x_6) \in \QQ[Y, n].
\end{equation}
An expression for $P_*$ as a polynomial in $Y$ and $n$ is given in \cref{sec:poly}.

We have the following result concerning roots of $P_*$:
\begin{prop}\label{prop:CurveFiniteness}
	The curve $C \subset \mathbb{A}^2$ defined by $C\colon P_* = 0$ admits only finitely many rational points. (Here we view $Y$ and $n$ as variables.)
\end{prop}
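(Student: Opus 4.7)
The plan is to invoke Faltings' theorem, which asserts that every smooth projective curve of genus at least $2$ over $\mathbb{Q}$ admits only finitely many rational points. Since the curve $C$ may fail to be geometrically irreducible, the first task is to decompose it into its irreducible components; we then bound the genus of each component from below, so that Faltings' theorem may be applied individually to each.

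Concretely, I would begin from the explicit expression for $P_* \in \mathbb{Q}[Y,n]$ provided in \cref{sec:poly}, which is obtained by specialising the elementary symmetric polynomials occurring in $\Phi$ via the coefficients of $P_{n,6}^{\text{rec}}$ (themselves rational polynomials in $n$, by \cref{eq:ReciprocalPolynomial}). Next, I would factor $P_*$ in $\mathbb{Q}[Y,n]$ using a computer algebra system such as Magma or SageMath. For each irreducible factor $F(Y,n)$, I would compute the geometric genus of the projective plane curve obtained by homogenising $F$. This can be carried out by standard algorithms: one may compute the function field of the normalisation of the projective closure and apply the Riemann--Roch machinery, or resolve the singularities (e.g., via Puiseux expansions at each singular point) and apply the degree-genus formula corrected by the local contributions of the singularities. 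Both routes are supported natively in modern computer algebra systems.

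Once each irreducible component is shown to have geometric genus at least $2$, Faltings' theorem implies that every smooth projective model carries only finitely many $\mathbb{Q}$-rational points, whence the affine curve $C \subset \mathbb{A}^2$ itself contains only finitely many rational points, as claimed. The main obstacle here is not conceptual but computational: as the authors emphasise, $P_*$ is unwieldy, and both its factorisation and the genus computation on each factor require substantial computer algebra resources. The approach would also require additional work if some component turned out to have genus $0$ or $1$: in the rational case one would need to check that the rational parametrisation does not hit $\mathbb{Q}$-points (or handle them explicitly), and in the elliptic case one would need to compute the Mordell--Weil rank of the associated elliptic curve. We expect, however, that all components will have genus at least $2$, so that no such case analysis will be necessary.
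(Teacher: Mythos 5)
Your proposal follows essentially the same route as the paper: the authors compute with Magma that the (normalisation of the) projective closure of $C$ has genus $7$ and then invoke Faltings' theorem, exactly as you suggest. The contingencies you prepare for (reducible factors, components of genus $0$ or $1$) simply do not arise, as the computation shows the relevant curve has genus $7$, so no further case analysis is needed.
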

\begin{proof}
	Using a computer algebra system such as Magma, one finds that the genus of the (normalisation of) the projective closure of $C$ is $7$.
	The result then follows by Faltings' theorem (see Theorem~11.1.1 on page~352 in~\cite{bombieri2006}, especially the paragraph following it, concerning relaxation of the assumptions of the theorem as stated there).
\end{proof}

\section{Proof of \texorpdfstring{\cref{thm:Finiteness}}{Theorem\autoref{thm:Finiteness}}}
\label{sec:ProofOfTheorem}

We may assume that $n\geq 8$.
Suppose that the Galois group $G$ of $P_{n,6}$ is \emph{not} isomorphic to $\sym_6$.
Then, by \cref{thm:FM}, we must have $G \cong \PGL(2;5)$.
We shall argue below that $n$ can be completed to a point $(y,n)\in\ZZ^2$ that belongs to the curve in \cref{prop:CurveFiniteness}.
As the number of such points is finite by that proposition, the conclusion of \cref{thm:Finiteness} follows.

Consider the polynomial $P_{n,6}^{\text{rec}}$ from~\cref{eq:ReciprocalPolynomial}.
Since the roots of $P_{n,6}^{\text{rec}}$ in the algebraic closure of $\QQ$ are simply the reciprocals of the roots of $P_{n,6}$, they generate the same field extension, and the Galois groups of $P_{n,6}^{\text{rec}}$ and $P_{n,6}$ coincide.
\cref{lem:resolvent} guarantees that the polynomial $P_*$ constructed in \cref{eq:Resolvent} admits an integer root $y$ (in $Y$): $P^*(y,n) = 0$.

\section*{Acknowledgements}
The first author acknowledges support of the \emph{Austrian Science Fund (FWF)}, project number~W1230.
The second author is partially supported by the joint FWF--ANR project \emph{ArithRand} (FWF~I~4945-N and ANR-20-CE91-0006).
The second author would like to thank the algebra group at the University of Würzburg for their hospitality and helpful conversations.
Both authors are grateful to Michael Filaseta for bringing the reference~\cite{juillerat2021} to their attention, and to the anonymous referee for a careful reading of a preliminary version of the present work.

\appendix

\section{The polynomial \texorpdfstring{$P_*$}{P*}}\label{sec:poly}

The polynomial $P_*$ from~\cref{eq:Resolvent} has the form
\[
	P_* = c_*c_0 + c_*c_1 Y - c_*c_2 Y^2 - c_*c_3 Y^3 + c_*c_4 Y^4 - c_*c_5 Y^5 + Y^6 \in \QQ[Y, n],
\]
where $c_* = 2^{-49} 3^{-28} 5^{-14}$ and the coefficients $c_0,\ldots,c_5$ are polynomials in $n$, given below:
\begingroup%
\allowdisplaybreaks%
\footnotesize%
\newcommand{\brkLN}[1]{ +{} \hfill \\ #1 }%
\newcommand{\typesetCoefficient}[2]{
	#1 \times{} \\ & \quad \times
	\lparen\begin{multlined}[t]
		#2 \rparen, \hfill
	\end{multlined}
}%
\begin{align*}
	c_0 &= \typesetCoefficient{ (n-8) (n-5)^6 (n-4)^{12} (n-3)^{12} (n-2)^{13} (n-1)^{14} n^{16} }{
	23211360000 - 853011648000 n + 2333434003200 n^2 - 2935936437120 n^3 \brkLN{+} 2207949294816 n^4 - 1094840099712 n^5 + 371933306128 n^6 \brkLN{-} 87035026584 n^7 + 13698418819 n^8 - 1367062354 n^9 + 78907414 n^{10} \brkLN{-} 3408634 n^{11} + 446764 n^{12} - 58294 n^{13} + 3658 n^{14} - 102 n^{15} + n^{16} } \\
	c_1 &= \typesetCoefficient{ 2^9 3^5 5^2 (n-5)^5 (n-4)^{10} (n-3)^{10} (n-2)^{11} (n-1)^{11} n^{13} }{
	933120000 + 114124464000 n - 349180113600 n^2 + 476362740960 n^3 \brkLN{-} 390376813968 n^4 + 213042791928 n^5 - 80241328004 n^6 + 20941057250 n^7 \brkLN{-} 3686089927 n^8 + 406799205 n^9 - 22548239 n^{10} - 172271 n^{11} + 96915 n^{12} \brkLN{-} 4273 n^{13} + 23 n^{14} + n^{15} } \\
	c_2 &= \typesetCoefficient{ 2^{17} 3^{10} 5^5 (n-5)^4 (n-4)^8 (n-3)^8 (n-2)^8 (n-1)^9 n^{11} }{
	-231336000 + 594475200 n - 824462640 n^2 + 774086400 n^3 - 490731756 n^4 \brkLN{+} 207658964 n^5 - 58291061 n^6 + 10455504 n^7 - 1113065 n^8 + 59036 n^9 \brkLN{-} 519 n^{10} - 64 n^{11} + n^{12} } \\
	c_3 &= \typesetCoefficient{ 2^{27} 3^{15} 5^8 (n-5)^3 (n-4)^6 (n-3)^6 (n-2)^6 (n-1)^7 n^8 }{
	-237600 + 490800 n - 378024 n^2 + 139240 n^3 - 20288 n^4 - 3645 n^5 \brkLN{+} 1723 n^6 - 215 n^7 + 9 n^8 } \\
	c_4 &= \typesetCoefficient{ 2^{33} 3^{20} 5^{10} (n-5)^2 (n-4)^4 (n-3)^4 (n-2)^4 (n-1)^5 n^5 }{
	-7200 + 8940 n - 4888 n^2 + 1779 n^3 - 233 n^4 + n^5 + n^6 } \\
	c_5 &= 2^{42} 3^{24} 5^{12} (n-5) (n-4)^2 (n-3)^2 (n-2)^2 (n-1)^3 n^3 (n^2-5n-18).
\end{align*}%
\endgroup%

\begin{remappendix}
	On specialising $n$ to $10$, we find that
	\begingroup%
	\allowdisplaybreaks%
	\newcommand{\brkLN}[1]{ +{} \hfill \\ #1 }%
	\[ P_*(Y,10) = Y^6 \begin{multlined}[t]
		- 50803200 Y^5
		+ 996987398400000 Y^4
		\brkLN{-} 9656304644044800000000 Y^3
		\brkLN{+} 49125670785368303616000000000 Y^2
		\brkLN{-} 124886101722949886807900160000000000 Y
		\brkLN{+} 123065660595497826223597289472000000000000,
		\hfill
	\end{multlined} \]
	\endgroup%
	which has an integer root at $14817600$.
	This corresponds to the fact that the Galois group of $P_{10,6}$ is isomorphic to $\PGL(2;5)$ (see the remarks after \cref{thm:FM}).
\end{remappendix}

\begin{remappendix}
	The anonymous referee observed that one may pass to simpler curves than the one defined by the vanishing of $P_*$.
	For instance, upon replacing $Y$ by
	\[
		(n-5) (n-4)^2 (n-3)^2 (n-2)^3 (n-1)^3 n^3 Z
	\]
	and pulling out a factor of
	\(
		(n-5)^6 (n-4)^{12} (n-3)^{12} (n-2)^{13} (n-1)^{14} n^{16}
	\),
	one is left with a curve defined by the vanishing of a polynomial of the shape
	\[
		Q_{17} + Q_{16} Z + Q_{15} Z^2 + Q_{13} Z^3 + \tilde{Q}_{13} Z^4 + Q_{12} Z^5 + Q_{11} Z^6
		\in \QQ[Z, n],
	\]
	where $Q_r,\tilde{Q}_r\in\QQ[n]$ denote certain polynomials of degree $r$.
\end{remappendix}

%
%
%
%
%
%
%


\vfill%
\end{document}